\numberwithin{equation}{section}
\newcommand{\bn}{\mathbb{N}}
\newcommand{\norm}[1]{\left\lVert#1\right\rVert}
\newcommand{\seq}[1]{\left( #1\right)}
\newcommand{\floor}[1]{\left\lfloor#1\right\rfloor}
\newcommand\modu{\operatorname{mod}}
\newcommand{\countl}{\mathcal{H}}
\theoremstyle{plain}
\newtheorem{theorem}{Theorem}
\newtheorem{corollary}[theorem]{Corollary}
\title{On pair correlation and discrepancy}
\author{Sigrid Grepstad and Gerhard Larcher}
\address{Institute of Financial Mathematics and Applied Number Theory, Johannes Kepler University Linz, Altenbergerstr.\ 69, A-4040 Linz, Austria.}
\email{sigrid.grepstad@jku.at}
\address{Institute of Financial Mathematics and Applied Number Theory, Johannes Kepler University Linz, Altenbergerstr.\ 69, A-4040 Linz, Austria.}
\email{gerhard.larcher@jku.at}
\thanks{The authors are supported by the Austrian Science Fund (FWF): Projects F5505-N26 and F5507-N26, which are both part of the Special Research Program ``Quasi-Monte Carlo Methods: Theory and Applications''.}
\subjclass{Primary 11K06; Secondary 11K38}
\keywords{Pair correlation of sequences, uniform distribution modulo one, discrepancy}
\date{May 5, 2017}
\begin{document}
 
 \begin{abstract}
We say that a sequence $\seq{x_n}_{n \geq 1}$ in $[0,1)$ has Poissonian pair correlations if 
 \begin{equation*}
  \lim_{N \rightarrow \infty} \frac{1}{N} \# \left\{ 1 \leq l \neq m \leq N \, : \, \norm{x_l-x_m} < \frac{s}{N} \right\} = 2s
 \end{equation*}
for all $s>0$. In this note we show that if the convergence in the above expression is - in a certain sense - fast, then this implies a small discrepancy for the sequence $\seq{x_n}_{n \geq 1}$. As an easy consequence it follows that every sequence with Poissonian pair correlations is uniformly distributed in $[0,1)$.
 \end{abstract}
 
 \maketitle
 

\section{Introduction} 
The concept of Poissonian pair correlations for a sequence $\seq{x_n}_{n\geq1}$ in $[0,1)$ was introduced by Rudnick and Sarnak in \cite{rudnick}, and has been intensively studied by several authors over the last years (see for instance \cite{all, heath, rsz, rudzah, rudzah2}). Let $\norm{\cdot}$ denote distance to the nearest integer. We say that a sequence $\seq{x_n}_{n \geq 1}$ of real numbers in $[0,1)$ has \emph{Poissonian pair correlations} if
\begin{equation}
 \lim_{N \rightarrow \infty} \frac{1}{N} \# \left\{ 1 \leq l \neq m \leq N \, : \, \norm{x_l-x_m} < \frac{s}{N} \right\} = 2s
\label{eq:poisson}
\end{equation}
for every $s>0$. 

In this note we are concerned with the relation between the Poissonian pair correlation property and the notion of uniform distribution. We say that the sequence $\seq{x_n}_{n \geq 1}$ is \emph{uniformly distributed}, or \emph{equidistributed}, in $[0,1)$ if 
\begin{equation*}
 \lim_{N \rightarrow \infty} \frac{1}{N} \# \left\{ 1 \leq n \leq N \, : \, x_n \in [a,b) \right\} = b-a
\end{equation*}
for all $0 \leq a \leq b \leq 1$. It is well-known that uniform distribution does not necessarily imply Poissonian pair correlations. One example confirming this is the Kronecker sequence $\seq{\{ n \alpha \} }_{n \geq 1}$, which is uniformly distributed for every irrational $\alpha$, but does \emph{not} have Poissonian pair correlations for any value of $\alpha$. 
Whether the converse implication holds has until recently remained an open question: Is every sequence in $[0,1)$ with Poissonian pair correlations uniformly distributed? We answer this question in the affirmative by establishing a quantitative result connecting the speed of convergence in \eqref{eq:poisson} to the star-discrepancy $D_N^*$ of the sequence. We recall that the star-discrepancy $D_N^*$ of $\seq{x_n}_{n \geq 1}$ is defined as
\begin{equation*}
  D_N^* = \sup_{0 \leq a \leq 1} \left| \frac{1}{N} \cdot A_N\left( [0,a)\right) - a \right|,
\end{equation*}
where $A_N([0,a)) := \# \{ 1 \leq n \leq N \, : \, x_n \in [0,a) \}$, and that $\seq{x_n}_{n \geq 1}$ is uniformly distributed in $[0,1)$ if and only if $\lim_{N\rightarrow \infty} D_N^* = 0$ (see for example \cite{kuipers}).

The main result of this paper is the following. 
\begin{theorem}
 Let $\seq{x_n}_{n\geq 1}$ be a sequence in $[0,1)$, and suppose that there exists a function $F : \bn \times \bn \rightarrow \mathbb{R}^+$ which is monotonically increasing in its first argument, and which satisfies 
\begin{equation}
 \max_{s=1, \ldots , K} \left| \frac{1}{2s} \# \left\{ 1 \leq l \neq m \leq N \, : \, \norm{x_l-x_m} < \frac{s}{N} \right\} - N \right| \leq F(K,N)
\label{eq:Fcond}
 \end{equation}
for all $N \in \bn$ and all $K\leq N/2$. One can then find an integer $N_0>0$ such that for $N\in \bn$, $N\geq N_0$, and arbitrary $K$ satisfying
  \label{thm:main}
\begin{equation}
\min \left(\frac{1}{2} N^{2/5}, \frac{N}{F\left(K^{2} , N\right)}\right) \leq K \leq N^{2/5},
\label{eq:min}
\end{equation}
we have
\begin{equation*}
ND_{N}^{*} \leq 5 \cdot \max \left(N^{4/5}, \sqrt{N \cdot F \left(K^{2}, N\right)}\right)
\end{equation*}
where $D_N^*$ is the star-discrepancy of $\seq{x_n}_{n \geq 1}$.
\end{theorem}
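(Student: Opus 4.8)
The plan is to pass to the exponential sums $S_N(k) := \sum_{n=1}^N e^{2\pi i k x_n}$ and to deduce the discrepancy bound from the classical Erdős–Turán inequality
\[
 D_N^* \leq \frac{C}{M} + \frac{C}{N}\sum_{k=1}^{M}\frac{1}{k}\left|S_N(k)\right|,
\]
valid for every $M\in\bn$ with an absolute constant $C$. By Cauchy–Schwarz the sum on the right is at most $\left(\sum_{k=1}^M k^{-2}\right)^{1/2}\left(\sum_{k=1}^M |S_N(k)|^2\right)^{1/2}$, so everything reduces to an upper bound for the low-frequency energy $\sum_{k=1}^M|S_N(k)|^2$. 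The whole point is that this energy is exactly the kind of quantity that the pair-correlation counts in \eqref{eq:Fcond} control, once one chooses the cut-off $M$ in the right relation to $K$. Guided by the exponents in the statement I will take $\delta := K^2/N$ and $M := \floor{N/(2K^2)}$; the hypothesis $K\leq N^{2/5}$ guarantees $\delta\leq \tfrac12$ and $K^2\leq N/2$, so that the condition \eqref{eq:Fcond} may legitimately be applied with its first argument equal to $K^2$.

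The device linking the two sides is the triangle kernel $\Delta_\delta(t):=\left(1-\norm{t}/\delta\right)_{+}$, supported on $\norm{t}<\delta=K^2/N$. Its Fourier coefficients $\widehat{\Delta_\delta}(k)=\delta\left(\sin(\pi k\delta)/(\pi k\delta)\right)^2$ are all nonnegative and satisfy $\widehat{\Delta_\delta}(k)\geq 4\delta/\pi^2$ for $|k|\leq 1/(2\delta)=N/(2K^2)$. Expanding the Lipschitz function $\Delta_\delta$ in its (absolutely convergent) Fourier series gives the identity $\sum_{l,m=1}^N \Delta_\delta(x_l-x_m)=\sum_{k\in\Z}\widehat{\Delta_\delta}(k)|S_N(k)|^2$. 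Dropping the nonnegative terms with $|k|>N/(2K^2)$ and isolating the $k=0$ term $\widehat{\Delta_\delta}(0)N^2=\delta N^2$ yields
\[
 \sum_{0<|k|\leq N/(2K^2)}|S_N(k)|^2 \;\leq\; \frac{\pi^2}{4\delta}\left(\sum_{l,m=1}^N \Delta_\delta(x_l-x_m)-\delta N^2\right).
\]
To bound the right-hand side I will write $\Delta_\delta$ as an average of indicators, $\Delta_\delta(t)=\delta^{-1}\int_0^\delta \mathbf 1_{\norm{t}<r}\,dr$, so that the off-diagonal part of $\sum_{l,m}\Delta_\delta$ equals $K^{-2}\int_0^{K^2}\widetilde R(s)\,ds$, where $\widetilde R(s):=\#\{1\leq l\neq m\leq N:\norm{x_l-x_m}<s/N\}$. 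Since $\widetilde R$ is monotone, $\int_0^{K^2}\widetilde R(s)\,ds\leq\sum_{j=1}^{K^2}\widetilde R(j)$, and \eqref{eq:Fcond} (with first argument $K^2$) gives $\widetilde R(j)\leq 2jN+2jF(K^2,N)$ for each $j\leq K^2$. Summing the arithmetic progression and adding the diagonal contribution $N$ produces, after the crucial cancellation of $\delta N^2=NK^2$, the estimate
\[
 \sum_{l,m=1}^N\Delta_\delta(x_l-x_m)-\delta N^2 \;\leq\; 2N+\left(K^2+1\right)F(K^2,N),
\]
and hence $\sum_{0<|k|\leq N/(2K^2)}|S_N(k)|^2\leq \tfrac{\pi^2}{2}\left(N^2/K^2+N\,F(K^2,N)\right)$.

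Feeding this into Erdős–Turán with $M=\floor{N/(2K^2)}$ and using $\sum_{k\geq1}k^{-2}=\pi^2/6$ yields
\[
 N D_N^* \;\lesssim\; K^2 + \sqrt{\frac{N^2}{K^2}+N\,F(K^2,N)} \;\lesssim\; K^2 + \frac{N}{K} + \sqrt{N\,F(K^2,N)}.
\]
Here $K^2\leq N^{4/5}$ by hypothesis, and the middle term is absorbed by the lower bound in \eqref{eq:min}: if $K\geq\tfrac12 N^{2/5}$ then $N/K\leq 2N^{3/5}\leq N^{4/5}$, while if $K\geq N/F(K^2,N)$ then $N\leq KF(K^2,N)\leq K^2F(K^2,N)$, whence $(N/K)^2\leq N\,F(K^2,N)$ and $N/K\leq\sqrt{N\,F(K^2,N)}$. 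In either case $ND_N^*\lesssim\max\left(N^{4/5},\sqrt{N\,F(K^2,N)}\right)$, and choosing $N_0$ large enough that the lower-order terms and the $N^{3/5}\leq N^{4/5}$ comparison hold gives the stated bound. I expect the genuine work to lie in the middle paragraph: securing the exact cancellation of the $k=0$ mass so that the main term is $N^2/K^2$ rather than something of size $N^2$, handling the non-integer values of $s$ through monotonicity of $\widetilde R$ since \eqref{eq:Fcond} only constrains integer $s$, and tracking the constants from Erdős–Turán and from the $\operatorname{sinc}$ lower bound carefully enough to reach the clean factor $5$.
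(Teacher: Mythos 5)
Your argument is correct in substance but takes a genuinely different route from the paper. The paper's proof is entirely elementary and combinatorial: assuming $ND_N^*>H$ on some interval $[0,B)$, it bins the points into intervals of length $K/N$, bounds the pair counts $\countl_L$ from below by telescoping window sums $Z_L$, reduces to minimizing $\sum_i G_i^2$ under the linear constraints forced by the discrepancy assumption, and derives the contradiction $H^2<H^2$. You instead go through Fourier analysis: the Fej\'er kernel $\Delta_\delta$ with $\delta=K^2/N$ converts the integer-$s$ pair-correlation bounds of \eqref{eq:Fcond} (applied with first argument $K^2$, which is legitimate since $K^2\le N^{4/5}\le N/2$ for $N$ large) into the energy estimate $\sum_{0<|k|\le N/(2K^2)}|S_N(k)|^2\lesssim N^2/K^2+N\,F(K^2,N)$, and Erd\H{o}s--Tur\'an plus Cauchy--Schwarz finishes. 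Your individual steps check out, including the cancellation of the $k=0$ mass, the treatment of non-integer $s$ via monotonicity of $\widetilde R$, and the case analysis on the lower bound in \eqref{eq:min}; pleasantly, your main term $N^2/K^2$ is even smaller than the paper's $N^2/K$. What the paper's approach buys is self-containedness and the explicit constant $5$; what yours buys is brevity, a transparent conceptual link between pair correlations and $L^2$ Weyl sums, and a slightly stronger intermediate energy bound. The one genuine shortfall is quantitative: with the standard constants in Erd\H{o}s--Tur\'an and the $4/\pi^2$ loss in the Fej\'er coefficients, your route yields $C\cdot\max\left(N^{4/5},\sqrt{N\,F(K^2,N)}\right)$ for an absolute constant $C$ that straightforward bookkeeping places closer to $10$ than to $5$, and it is not clear the factor $5$ is reachable this way; you flag this yourself, but if the constant is regarded as part of the statement, that step is still missing.
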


The next result is an easy consequence of Theorem \ref{thm:main}.
\begin{corollary}
If the sequence $\seq{x_n}_{n\geq 1}$ in $[0,1)$ has Poissonian pair correlations, then it is uniformly distributed.\footnote{Simultaneously with our proof, another elegant proof of this result was given by Aistleitner et.al. in \cite{alp}. However, their approach is less elementary and does not provide the quantitative bound on the star discrepancy given by Theorem \ref{thm:main}.}
 \label{cor:main}
\end{corollary}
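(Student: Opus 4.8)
The plan is to derive the Corollary from Theorem \ref{thm:main} by producing, for a sequence with Poissonian pair correlations, a majorizing function $F$ and a choice of $K=K(N)$ that make the right-hand side of the discrepancy bound $o(N)$. For $L,N\in\bn$ write
$$
G(L,N) := \max_{s=1,\ldots,L}\left| \frac{1}{2s}\,\#\left\{ 1\le l\neq m\le N : \norm{x_l-x_m} < \frac{s}{N}\right\} - N\right|,
$$
so that $G$ is, by construction, monotonically increasing in its first argument and satisfies \eqref{eq:Fcond} with $F=G$. The first observation is that \eqref{eq:poisson} forces, for every \emph{fixed} $s$,
$$
\frac{1}{N}\left|\frac{1}{2s}\,\#\{\cdots\} - N\right| = \frac{1}{2s}\left|\frac{1}{N}\#\{\cdots\} - 2s\right|\longrightarrow 0\qquad(N\to\infty),
$$
and hence, being a finite maximum of such quantities, $G(L,N)/N\to 0$ as $N\to\infty$ for each fixed $L$.

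The crux is that Theorem \ref{thm:main} cannot be applied with $L=K^2$ held at a fixed power of $N$, since \eqref{eq:poisson} only yields convergence \emph{pointwise} in $s$; instead I must let the range grow slowly with $N$. By a diagonal argument, using that $G(L,N)/N\to 0$ for each fixed $L$, I can choose integers $K_N\to\infty$ with $K_N\le N^{2/5}$ and with the convergence still controlled, say
$$
G\left(K_N^{2},N\right)\le \frac{N}{K_N}\qquad\text{for all } N\ge N_0 .
$$
The delicate point — and the main obstacle — is that the admissibility window \eqref{eq:min} is self-referential: its lower endpoint depends on $F$ evaluated at $K^2$, and a slowly growing $K_N$ will in general violate $\tfrac12 N^{2/5}\le K_N$. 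I resolve this by enlarging $G$ to
$$
F(L,N) := G(L,N) + \frac{N}{K_N},
$$
which is still monotone in $L$ and still majorizes $G$, so that \eqref{eq:Fcond} persists. With this $F$ one has $F(K_N^2,N)\ge N/K_N$, whence $N/F(K_N^2,N)\le K_N$ and therefore
$$
\min\left(\tfrac12 N^{2/5},\, N/F(K_N^2,N)\right)\le K_N\le N^{2/5},
$$
so that $K_N$ lies in the admissible range \eqref{eq:min}.

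It then remains to feed this into Theorem \ref{thm:main} and read off the conclusion. Since $G(K_N^2,N)\le N/K_N$, one has $F(K_N^2,N)\le 2N/K_N$, and the theorem yields
$$
N D_N^* \le 5\,\max\left(N^{4/5},\, \sqrt{N\cdot F(K_N^2,N)}\right)\le 5\,\max\left(N^{4/5},\, \sqrt{2}\,\frac{N}{\sqrt{K_N}}\right).
$$
Dividing by $N$, both $N^{4/5}/N=N^{-1/5}$ and $1/\sqrt{K_N}$ tend to $0$ (the latter because $K_N\to\infty$), so $D_N^*\to 0$, which is precisely uniform distribution of $\seq{x_n}_{n\ge1}$. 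I expect the only genuinely nontrivial step to be the simultaneous calibration of the growth rate $K_N$ and of the correction term in $F$: the rate must be fast enough that $K_N\to\infty$ (to kill $N/\sqrt{K_N}$) yet slow enough that $G(K_N^2,N)\le N/K_N$ survives, while the padding $N/K_N$ must be tuned precisely so as to pull the implicit lower endpoint of \eqref{eq:min} below $K_N$ without inflating $\sqrt{N\,F(K_N^2,N)}$ beyond $o(N)$.
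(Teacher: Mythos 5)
Your proposal is correct, and it follows the same overall route as the paper: both derive the corollary by exhibiting a majorant $F$ for which Theorem \ref{thm:main} applies with an admissible $K$ and then reading off $D_N^*\to 0$. The difference lies entirely in how the self-referential window \eqref{eq:min} is handled. You build a single function $F$ valid for all $N$, choose $K_N\to\infty$ by diagonalization over the pointwise convergence in \eqref{eq:poisson}, and pad $G$ by $N/K_N$ so that $N/F(K_N^2,N)\le K_N$; this yields $D_N^*\to 0$ in one shot. The paper instead runs a standard $\varepsilon$-argument: for fixed $\varepsilon>0$ it takes $F(L,N)=\varepsilon N$ on the range $L\le 1/\varepsilon^5$, $N\ge N(\varepsilon)$, and the \emph{constant} choice $K=\floor{1/\varepsilon^2}$, for which $N/F(K^2,N)=1/\varepsilon\le K$ verifies \eqref{eq:min} directly with no padding or diagonalization; Theorem \ref{thm:main} then gives $D_N^*\le 5\sqrt{\varepsilon}$ for all large $N$, and $\varepsilon$ is sent to $0$ afterwards. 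Both arguments are sound; the paper's is somewhat lighter because freezing $\varepsilon$ removes the need to calibrate a growth rate for $K_N$ against the (unknown) speed of convergence in \eqref{eq:poisson}, while your version makes explicit that a single uniform choice of $F$ and $K_N$ suffices. One small point to tidy in your write-up: $F$ must be defined (and positive) for all $N\in\bn$, so you should specify $K_N$ (say $K_N=1$) for $N$ below the threshold where the diagonalization starts; this is cosmetic and does not affect the argument.
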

\begin{proof}
 Suppose that $\seq{x_n}_{n \geq 1}$ has Poissonian pair correlations, and fix any $\varepsilon>0$. We then have
  \begin{equation*}
   \max_{s=1, \ldots, \floor{1/\varepsilon^{5}}} \left| \frac{1}{2s} \# \left\{ 1\leq l \neq m \leq N \, : \, \norm{x_l - x_m} < \frac{s}{N} \right\} - N\right| \leq \varepsilon N ,
  \end{equation*}
 for all sufficiently large $N\geq N(\varepsilon)$. Hence, we may construct a function $F$ satisfying \eqref{eq:Fcond} where $F(L,N)=\varepsilon N$ for $N\geq N(\varepsilon)$ and $L\leq 1/\varepsilon^{5}$. Without loss of generality, we may assume that $N(\varepsilon) \geq 1/\varepsilon^5$. If we fix $K:=\floor{1/\varepsilon^2}$, then for $N\geq N(\varepsilon)$ we have
 \begin{equation*}
  \frac{N}{F(K^2,N)}=\frac{N}{\varepsilon N} = \frac{1}{\varepsilon} \leq K \leq N^{2/5},
 \end{equation*}
and accordingly $K$ satisfies \eqref{eq:min}. By Theorem \ref{thm:main} it thus follows that
 \begin{equation*}
 D_{N}^{*} \leq \frac{5}{N} \cdot \max \left(N^{4/5}, N \varepsilon \right) = 5 \sqrt{\varepsilon}
 \end{equation*}
 for $N\geq N_0$ (where in particular $N_0\geq N(\varepsilon)$). 
\end{proof}

\section{Proof of Theorem \ref{thm:main}}
For a fixed pair of integers $(N,K)$, where $K$ satisfies \eqref{eq:min}, we introduce the notation
\begin{equation*}
 H(N,K) := 5 \cdot \max \left(N^{4/5}, \sqrt{N \cdot F \left(K^{2}, N\right)}\right).
\end{equation*}
Aiming for a proof by contradiction, we assume that $ND_N^* > H(N,K)$ for infinitely many pairs $(N,K)$. That is, there exist integers $1 < N_1 < N_2 < \ldots$ and corresponding integers $K_1, K_2, \ldots$ satisfying \eqref{eq:min}, as well as real numbers $B_1, B_2, \ldots  \in (0,1)$, such that either
\begin{equation}
 \# \left\{ 1 \leq n \leq N_j \, : \, x_n \in [0,B_j) \right\} - N_jB_j > H(N_j, K_j)
\label{eq:biggerH}
 \end{equation}
for every $j$, or
\begin{equation}
  \# \left\{ 1 \leq n \leq N_j \, : \, x_n \in [0,B_j) \right\} - N_jB_j < -H(N_j, K_j)
\label{eq:smallerH}
\end{equation}
for every $j$. We assume in what follows that \eqref{eq:biggerH} holds (the case when \eqref{eq:smallerH} holds is treated analogously). Note that \eqref{eq:biggerH} implies
\begin{equation}
N_{j} - N_{j}B_{j} - H\left(N_{j}, K_j \right) > 0.
\label{eq:raute}
\end{equation}
Let $N:= N_j$, $K:=K_j$, $B:=B_j$ and $H:=H(N_j, K_j)$ for some fixed $j$.
We now consider the distribution of the points $x_n$ into subintervals of $[0,1)$ of length $K/N$. Let 
\begin{equation*}
 A_i := \# \left\{ 1 \leq n \leq N \, : \, x_n \in \left[ i \cdot \frac{K}{N}, (i+1) \cdot \frac{K}{N} \right) \right\}
\end{equation*}
for $i=0,1,\ldots , \floor{N/K}-1$, and let
\begin{equation*}
 A_{\floor{N/K}}:= \# \left\{ 1 \leq n \leq N \, : \, x_n \in \left[ \floor{\frac{N}{K}}\cdot \frac{K}{N}, 1 \right) \right\}.
\end{equation*}
Moreover, for arbitrary positive integers $l$, let
\begin{equation*}
 A_l := A_{l \modu (\floor{N/K}+1)}.
\end{equation*}
If we introduce the notation
\begin{equation*}
 \countl_L := \# \left\{ 1 \leq l \neq m \leq N \, :\, \norm{x_l-x_m} < \frac{LK}{N}\right\}
\end{equation*}
for $L=1,2, \ldots , K$, then
\begin{equation}
\left| \frac{1}{2LK} \countl_L - N \right| \leq F(K^2,N).
 \label{eq:HFrel}
\end{equation}
We have that 
\begin{equation*}
 \begin{aligned}
  \countl_L &\geq \sum_{i=0}^{\floor{N/K}} \left( A_i(A_i-1) +2A_i(A_{i+1}+\cdots + A_{i+L-1}) \right)\\
  &= \sum_{i=0}^{\floor{N/K}} \left( (A_i+ \ldots + A_{i+L-1})^2-(A_{i+1}+\ldots +A_{i+L-1})^2 \right) -N \\
  &=: 2LKN \cdot \gamma_L - N ,
 \end{aligned}
\end{equation*}
where
\begin{equation*}
 \gamma_L = \frac{1}{2LKN} \sum_{i=0}^{\floor{N/K}} \left( (A_i+ \cdots +A_{i+L-1})^2-(A_{i+1}+ \cdots + A_{i+L-1})^2 \right).
\end{equation*}
Thus, we get 
\begin{equation}
 \frac{1}{2LKN} \cdot \countl_L \geq \gamma_L - \frac{1}{2LK} .
\label{eq:countlbound}
 \end{equation}
 
Now consider 
\begin{equation}
 \Gamma_K := \min_{x_1, \ldots , x_N} \max_{L=1,2\ldots , K} \gamma_L , 
\label{eq:Gammadef}
\end{equation}
where by $\min_{x_1, \ldots , x_N}$ we mean the minimum over all configurations of the points $x_1, \ldots , x_N$ satisfying \eqref{eq:biggerH}. If we define
\begin{equation*}
 Z_L:= \frac{1}{2LKN} \sum_{i=0}^{\floor{N/K}} \left( A_i+A_{i+1}+\cdots + A_{i+L-1} \right)^2,
\end{equation*}
then
\begin{equation*}
 \gamma_L = Z_L - \frac{L-1}{L} \cdot Z_{L-1} , 
\end{equation*}
and thus
\begin{equation*}
 \Gamma_K = \min_{x_1, \ldots , x_N} \max \left( Z_1, Z_2-\frac{1}{2}Z_1, \cdots , Z_K - \frac{K-1}{K} Z_{K-1} \right) .
 \end{equation*}
We have 
\begin{equation*}
\max \left( Z_1, Z_2-\frac{1}{2}Z_1, \cdots , Z_K - \frac{K-1}{K} Z_{K-1} \right) \geq \frac{2}{K+1} Z_K.
\end{equation*}
To see this, assume to the contrary that $Z_1$ and $Z_L-(L-1)Z_{L-1}/L$ are all less than $2Z_K/(K+1)$. Then by succesive insertions we get the contradiction $Z_K < Z_K$.
Hence, we have
\begin{equation}
 \Gamma_K \geq \min_{x_1, \ldots , x_N} \frac{2}{K+1} \cdot Z_K . 
 \label{eq:GB1} 
\end{equation}
Let us now estimate
 \begin{equation*}
  \min_{x_1, \ldots , x_N} Z_K = \frac{1}{2K^2N} \min_{A_0, A_1, \ldots , A_{\floor{N/K}}} \sum_{i=0}^{\floor{N/K}} (A_i+A_{i+1}+\cdots + A_{i+K-1})^2,
 \end{equation*}
where the minimum on the right hand side is taken over all possible values of $A_0, A_1, \ldots , A_{\floor{N/K}}$ provided that the points $x_1, \ldots , x_N$ satisfy \eqref{eq:biggerH}. 
By definition, we have $A_0+\cdots +A_{\floor{N/K}} = N$. Introducing the notation $G_i=A_i+A_{i+1}+\cdots +A_{i+K-1}$, we thus get
 \begin{equation}
  \sum_{i=0}^{\floor{N/K}} G_i = K \cdot \sum_{i=0}^{\floor{N/K}} A_i = KN.
 \label{eq:Gsumtot}
  \end{equation} 
Moreover, by invoking condition \eqref{eq:biggerH} on the distribution of $x_1, \ldots , x_N$, we have
\begin{equation}
 \sum_{i=-K+1}^{\floor{NB/K}} G_i \geq K \sum_{i=0}^{\floor{NB/K}} A_i \geq K(NB+H),
\label{eq:Gsumbigger}
\end{equation}
and consequently
\begin{equation}
\sum_{i= \floor{NB/K}+1}^{\floor{N/K}-K} G_i \leq K\left(N(1-B)-H\right) .
 \label{eq:Gsumsmaller}
\end{equation}
We get
\begin{equation}
 \min_{x_1, \ldots, x_N} Z_K  \geq \frac{1}{2K^2N} \min_{G_0, G_1, \ldots , G_{\floor{N/K}}} \sum_{i=0}^{\floor{N/K}} G_i^2 , 
\label{eq:GB2}
 \end{equation}
where the minimum on the right hand side is taken over all positive reals $G_0, G_1, \ldots , G_{\floor{N/K}}$ satisfying \eqref{eq:Gsumtot} -- \eqref{eq:Gsumsmaller}. It is an easy exercise to verify that this minimum is attained when 
\begin{equation*}
 G_i = \frac{K(NB+H)}{K+\floor{NB/K}} \quad \text{ for } i=-K+1, \ldots , \floor{\frac{NB}{K}} ,
\end{equation*}
and 
\begin{equation*}
 G_i= \frac{K\left( N(1-B) - H\right)}{\floor{N/K}-K-\floor{NB/K}} \quad \text{ for } i=\floor{\frac{NB}{K}}+1 , \ldots , \floor{\frac{N}{K}}-K.
\end{equation*}
Note that since $K \leq N^{2/5}$ and $H \geq 5 N^{4/5}$, we have $K^{2} \leq H/5$, and hence by \eqref{eq:raute} both the numerator and the denominator of these $G_{i}$ are positive.
Thus, we get
\begin{equation}
\begin{aligned}
 &\frac{1}{2K^2N} \min_{G_0, G_1, \ldots , G_{\floor{N/K}}} \sum_{i=0}^{\floor{N/K}} G_i^2
 \\
 &\geq \frac{1}{2K^2N} \left( \frac{ K^2(NB+H)^2}{K+\floor{NB/K}} + \frac{K^2\left(N(1-B)-H \right)^2}{\floor{N/K}-K-\floor{NB/K}}\right) \\
 &\geq \frac{K}{2} \left( 1+\frac{H^2}{2N^2}\right)
\end{aligned}
\label{eq:GB3}
\end{equation}
for all $N > N_0$. For the final inequality in \eqref{eq:GB3}, we have again used that $H\geq 5N^{5/4}$ and $K^{2} \leq H/5$.

Finally, by combining \eqref{eq:GB3}, \eqref{eq:GB2} and \eqref{eq:GB1}, we find the lower bound
\begin{equation*}
 \Gamma_K \geq \frac{K}{K+1} \left( 1+\frac{H^2}{2N^2} \right).
\end{equation*}
From the definition \eqref{eq:Gammadef} of $\Gamma_K$ and \eqref{eq:countlbound}, it follows that 
\begin{equation*}
 \max_{L=1,\ldots , K} \frac{1}{2LKN} \countl_L > \Gamma_K - \frac{1}{2K} \geq 1+ \frac{H^2}{4N^2} - \frac{2}{K} ,
\end{equation*}
and recalling \eqref{eq:HFrel}, we get
\begin{equation*}
 \frac{1}{N} F(K^2, N) + 1 \geq \max_{L=1,\ldots , K} \frac{1}{2LKN} \countl_L > 1+ \frac{H^2}{4N^2} - \frac{2}{K} .
\end{equation*}
This implies that
\begin{equation*}
\begin{aligned}
&H^2 < \frac{8N^2}{K} + 4NF(K^2,N)
\\
&\leq 12 \max \left(\frac{N^{2}}{K}, N F \left(K^{2}, N\right)\right) \\
&< 25 \max \left(N^{8/5}, N F \left(K^{2}, N\right)\right) = H^{2},
\end{aligned}
\end{equation*}
which is a contradiction. Thus, our assumption \eqref{eq:biggerH} must be incorrect, and the proof of Theorem \ref{thm:main} is complete. (Note that the last inequality above is trivially true if $N^2/K \leq NF(K^2,N)$; In the opposite case we have $K<N/F(K^2,N)$, and by the condition \eqref{eq:min} imposed on $K$ we then get $K\geq N^{2/5}/2$, and consequently $N^2/K \leq 2N^{8/5}$.)

\section*{Acknowledgement}
The authors thank an anonymous reviewer who pointed out an inaccuracy in the first version of the paper. His helpful comments led to the current, slightly stronger version of Theorem~\ref{thm:main}.


\end{document}